\title{Extreme Value Estimation for\\ Discretely Sampled Continuous Processes}
\author{Holger Drees\footnote{University of Hamburg, Department of Mathematics,
 SPST, Bundesstr.\ 55, 20146 Hamburg, Germany;
email: drees@math.uni-hamburg.de} \and Laurens de Haan$^\ddag$\footnote{Erasmus University Rotterdam, Department of Economics, Burg.\ Oudlaan 50, 3062PA Rotterdam, The Netherlands; email: ldehaan@ese.eur.nl} \and Feridun Turkman\footnote{University of Lisbon, Department of Statistics, Bloco C6 Piso 4, Campo Grande, 1749-016 Lisboa, Portugal; email: kfturkman@fc.ul.pt}}
\newcommand{\R}{\mathbb{R}}
\newcommand{\N}{\mathbb{N}}
\theoremstyle{change} \theoremsymbol{$\Box$}
\newtheorem{theorem}{Theorem}[section]
\newtheorem{remark}[theorem]{Remark}
\newtheorem{lemma}[theorem]{Lemma}
\numberwithin{equation}{section}
\newtheorem{example}[theorem]{Example}
\newenvironment{proof}{\noindent{\sc Proof.}}{
      \hspace*{\fill} $\Box$ \vspace{2ex} }
\newenvironment{proofof}{\noindent\sc Proof of}{
    \hspace*{\fill} $\Box$ \vspace{2ex} }
\def\eps{\varepsilon}
\def\Min(#1,#2){#1\wedge #2}
\def\Max(#1,#2){#1\vee #2}
\newcommand{\floor}[1]{\lfloor #1 \rfloor}
\newcommand{\inter}[1]{\langle #1 \rangle}
\def\e{{\rm e}}
\def\rueck{\noindent\hangafter=1 \hangindent=1.3em}
\begin{document}

\maketitle

\begin{abstract}
  In environmental applications of extreme value statistics, the underlying stochastic process is often modeled either as a max-stable process in continuous time/space or as a process in the domain of attraction of such a max-stable process. In practice, however, the processes are typically only observed at discrete points and one has to resort to interpolation  to fill in the gaps. We discuss the influence of such an interpolation on estimators of marginal parameters as well as estimators of the exponent measure. In particular, natural conditions on the fineness of the observational scheme are developed which ensure that asymptotically the interpolated estimators behave in the same way as the estimators which use fully observed continuous processes.
\end{abstract}

\footnote{{\noindent\it Keywords and phrases:}  discrete and continuous sampling,  interpolation,  max-stable process\\
{\it AMS 2010 Classification:} Primary 62G32; Secondary 62G05, 62M30.\\
}

\section{Introduction}

In recent years, it has become common in environmetrics to model extreme events by stochastic processes and  random fields. Often max-stable processes are used to describe e.g.\ large amounts of precipitation (see e.g.\ Buhl and Kl\"{u}ppelberg, 2016, and Lehmann et al., 2016), high temperatures (cf.\ Fuentes et al., 2013, and Dombry et al., 2013) or high wind speeds (see Genton et al., 2015, or Oesting et al., 2017). If the observations are not maxima, but exceedances over high thresholds, this approach is not always appropriate.

Instead,  one may merely assume that the underlying process belongs to the domain of attraction of some max-stable process. Let $X^{(i)}=(X^{(i)}_t)_{t\in[0,1]}$, $1\le i\le n$, denote iid random processes with continuous sample paths. (The index set can easily be generalized to arbitrary compact subsets of $\R^d$.) We assume that there exist functions $(a_t(n))_{t\in[0,1]}$, $(b_t(n))_{t\in[0,1]}$, $n\in\N$, such that
\begin{equation} \label{eq:maxconv}
  \Big( \max_{1\le i\le n} \frac{X_t^{(i)}-b_t(n)}{a_t(n)}\Big)_{t\in[0,1]} \;\longrightarrow\; (Y_t)_{t\in[0,1]}=:Y
\end{equation}
weakly in $C[0,1]$ for some max-stable process $Y$ with non-degenerate margins. In particular, $Y_t$ has an extreme value distribution for each $t\in[0,1]$, and w.l.o.g.\ one may assume that
$$ P\{Y_t\le y\} = \exp\big(-(1+\gamma_t y)^{-1/\gamma_t}\big)=G_{\gamma_t}(y) $$
for all $y$ satisfying $1+\gamma_ty>0$ and some continuous function $(\gamma_t)_{t\in[0,1]}$. Hence, the cdf $F_t$ of $X^{(1)}_t$ belongs to the max domain of attraction of $G_{\gamma_t}$ and one may choose $b_t(n)=U_t(n):= F_t^\leftarrow(1-n^{-1})$, with $F^\leftarrow$ denoting the generalized inverse of a cdf $F$. Indeed, convergence \eqref{eq:maxconv} is equivalent to $F_t\in D(G_{\gamma_t})$ for all $t\in[0,1]$ and the following marginally standardized version
\begin{equation}  \label{eq:normconv}
 \Big( \frac 1n \max_{1\le i\le n} \xi_t^{(i)}\Big)_{t\in[0,1]} \;\longrightarrow\; \big((1+\gamma_t Y_t)^{1/\gamma_t}\big)_{t\in[0,1]}=: (Z_t)_{t\in[0,1]}=:Z
\end{equation}
weakly in $C[0,1]$, where
\begin{equation}  \label{eq:xidef}
 \xi_t^{(i)}:= \frac 1{1-F_t(X^{(i)}_t)}, \quad t\in[0,1], i\in\N
\end{equation}
(de Haan and Ferreira, 2006, Theorem 9.2.1).

The distribution of $Z$ (and thus the dependence structure of $Y$) is determined by the so-called exponent measure $\nu$ via the relation
\begin{equation}  \label{eq:nudef}
   P\{Z\in A\}=\exp(-\nu(A^c))
\end{equation}
for all Borel sets $A\subset C[0,1]$ of the type $A=\big\{f\in C[0,1]\mid f(t)\le x_j,\forall\,t\in K_j, 1\le j\le m\big\}$ for some $m\in\N$, compact sets $K_j\subset [0,1]$ and $x_j\in (0,\infty)$, $1\le j\le m$, provided $\inf\{\|f\|_\infty\mid f\in A^c\}>0$. The extreme value behavior of the process $X^{(1)}$ is thus described by the functions $(\gamma_t)_{t\in[0,1]}$, $(a_t(\cdot))_{t\in[0,1]}$ and $(U_t(\cdot))_{t\in[0,1]}$, and the exponent measure $\nu$. Estimators of these quantities have been proposed  by de Haan and Lin (2003), who also established their consistency. Einmahl and Lin (2006) proved the asymptotic normality of the marginal estimators under suitable conditions; see de Haan and Ferreira (2006), Chapter 10 for details.

All these estimators require that the processes $X^{(i)}$ are observed everywhere. In practice, however, measurements $X_t^{(i)}$, $1\le i\le n$, are often only made at certain discrete points $t_{n,j}$,  $1\le j\le j_n$, e.g.\ where weather stations are located.
If one assumes a fully parametric model for the extreme value behavior of the processes, one may infer the parameters from the discretely sampled observations (for instance, using a composite likelihood approach as in Buhl and Kl\"{u}ppelberg, 2016, or a generalized method of moments like de Haan and Pereira, 2006, or Oesting et al., 2017, in the context of max-stable models) and thus obtain estimators for the extreme value behavior at any point $t$.  If one refrains from making such restrictive assumptions, then one has to rely on interpolation to infer the extreme value behavior of the process at points outside the measurement grid. For particular classes of max-stable processes, such statistical interpolation has been discussed e.g.\ by Falk et al.\ (2015).
 In contrast, in Section 2 we give conditions under which consistency and asymptotic normality of generic estimators of the marginal functions and the exponent measure carry over to discretized versions of these estimators which only use observations $X^{(i)}_{t_{n,j}}$, $1\le i\le n$, $1\le j\le j_n$, in the general setting. In particular, we show that a simple interpolation method works under a stochastic smoothness condition for large values of the process. All proofs are deferred to Section 3.

Interpolation of max-stable process and of more general processes in extreme regions  has also been discussed in different contexts. For instance, Piterbarg (2004) examined when the maxima of a stationary  Gaussian process $Z=Z(t)_{t\in[0,T]}$ on the whole interval $[0,T]$ resp.\ on a discrete grid show the same asymptotic behavior, while Turkman (2012) considered the same problem for more general stationary processes. See also Albin (1990) for results in this spirit. Wang and Stoev (2011), Dombry et al.\ (2013) and Oesting and Schlather (2014), among others, developed algorithms to simulate a max-stable process $Y$ given its values on a finite grid. However, none of these papers dealt with fitting a model for discretely observed processes.

\section{Interpolation estimators}

\subsection{Estimating marginal parameters}

Several estimators of the marginal tail behavior have been discussed in the literature. We focus on estimators which use $k_n+1$ largest order statistics of $X_t^{(i)}$, $1\le i\le n$, denoted by $X_t^{(n-k_n:n)}\le X_t^{(n-k_n+1:n)}\le \cdots\le X_t^{(n:n)}$. Here $(k_n)_{n\in\N}$ is some intermediate sequence, i.e.\ $k_n\in\{1,\ldots,n\}$, $k_n\to\infty$ and $k_n/n\to 0$ as $n\to\infty$. These estimators are motivated by the assumption that, above the quantile $U_t(n/k_n)$, the tail of $F_t$ is well approximated by a generalized Pareto distribution (GPD), that is
$$ 1-F_t(x)\approx \frac {k_n}n \bigg( 1+\gamma_t \frac{x-U_t(n/k_n)}{a_t(n/k_n)}\bigg)^{-1/\gamma_t}, \quad x\ge U_t(n/k_n),
$$
and
$$ U_t(y)\approx U_t(n/k_n)+a_t(n/k_n)\frac{(k_ny/n)^{\gamma_t}-1}{\gamma_t}, \quad y\ge n/k_n. $$
To employ these approximations, for example for statistical inference on extreme quantiles, one needs estimators of $\gamma_t$, $a_t(n/k_n)$ and $U_t(n/k_n)$, $t\in[0,1]$.

De Haan and Lin (2003) and Einmahl and Lin (2006) proved consistency and asymptotic normality, respectively, uniformly for $t\in[0,1]$ for the following set of estimators:
\begin{eqnarray}
\hat\gamma_{n,t} &:= & \hat\gamma_{n,t}^{+}+\hat\gamma_{n,t}^{-}, \label{eq:gammadef}\\
\hat a_{n,t}(n/k_n) & := & X_t^{(n-k_n:n)}\hat\gamma_{n,t}^{+}(1-\hat\gamma_{n,t}^{-}), \label{eq:adef}\\
\hat U_{n,t}(n/k_n) & := & X_t^{(n-k_n:n)}, \label{eq:Udef}
\end{eqnarray}
where
\begin{eqnarray*}
  M_{n,t}^{(j)} & := & \frac{1}{k_n}\sum_{i=1}^{k_n}\Big(\log \frac{X_t^{(n-i+1:n)}}{X_t^{(n-k_n:n)}}\Big)^j,\quad j=1,2,\\
  \hat\gamma_{n,t}^{+} & := & M_{n,t}^{(1)},\\
  \hat\gamma_{n,t}^{-} & := & 1-\frac 12\left(1-\frac{(M_{n,t}^{(1)})^2}{M_{n,t}^{(2)}}\right)^{-1}.
\end{eqnarray*}
Because consistency and asymptotic normality of other estimators can be proved for more general estimators using similar techniques,  here we consider generic estimators $\hat\gamma_{n,t}$, $\hat U_{n,t}(n/k_n)$ and $\hat a_{n,t}(n/k_n)$ that only depend on $X_t^{(i)}$, $1\le i\le n$, for each $t\in [0,1]$, and satisfy the following condition for some positive bounded sequence $(\lambda_n)_{n\in\N}$.\\[1ex]
{\bf (E($\boldsymbol{\lambda}_{\mathbf{n}}$))}\;\; \;\parbox[t]{14.6cm}{There exists versions of the estimators (denoted by the same symbols) and processes $\Gamma$, $A$ and $B$ with continuous sample paths such that
\begin{eqnarray}
  \sup_{t\in[0,1]} \big|\lambda_n^{-1}(\hat\gamma_{n,t}-\gamma_t)-\Gamma_t \big| &\stackrel{(P)}{\longrightarrow} & 0 \label{eq:gammaconv}\\
  \sup_{t\in[0,1]} \bigg|\lambda_n^{-1}\Big(\frac{\hat a_{n,t}(n/k_n)}{a_t(n/k_n)}-1\Big)-A_t \bigg| &\stackrel{(P)}{\longrightarrow}& 0 \label{eq:aconv}\\
  \sup_{t\in[0,1]} \bigg|\lambda_n^{-1}\frac{\hat U_{n,t}(n/k_n)-U_t(n/k_n)}{a_t(n/k_n)}-B_t \bigg| &\stackrel{(P)}{\longrightarrow}& 0 \label{eq:Uconv}
\end{eqnarray}
}\\[1ex]
Note that \eqref{eq:gammaconv}--\eqref{eq:Uconv} imply the joint convergence of the standardized estimation errors for all three processes.
We are mainly interested in two cases. Condition (E(1)) (i.e.\  $\lambda_n=1$ for all $n\in\N$) with $\Gamma\equiv A\equiv B\equiv 0$  means consistency of the estimators, whereas (E($k_n^{-1/2}$)) with a  Gaussian process $(\Gamma,A,B)^T$ states the uniform joint asymptotic normality of the marginal estimators with the usual rate of convergence.

If the processes $X^{(i)}$, $1\le i\le n$, are only observed at points $t_{n,1}<t_{n,2} <\cdots<t_{n,j_n}$ in $[0,1]$, then one has to interpolate the resulting estimators $\hat\gamma_{n,t_{n,j}}$, $\hat a_{n,t_{n,j}}(n/k_n)$ and $\hat U_{n,t_{n,j}}(n/k_n)$ to obtain estimators of the marginal parameters at points $t\in[0,1]\setminus\{t_{n,j}|1\le j\le j_n\}$. The simplest approach is to use the estimator at the closest point of observation, but this results in estimators which (in contrast to the functions to be estimated) are not continuous. Therefore, here we consider linearly interpolated estimators.
For any function $z=(z_t)_{t\in[0,1]}$ and $t\in[0,1]$  let
\begin{equation} \label{eq:interdef}
 \langle z \rangle_{n,t} := \left\{
   \begin{array}{lcl}
     z_{t_{n,1}} & & t\le t_{n,1},\\
     \frac{t_{n,j}-t}{t_{n,j}-t_{n,j-1}} z_{t_{n,j-1}} + \frac{t-t_{n,j-1}}{t_{n,j}-t_{n,j-1}} z_{t_{n,j}} & \text{ if } & t_{n,j-1}<t\le t_{n,j} \text{ for some } 2\le j\le j_n, \\
     z_{t_{n,j_n}} & & t>t_{n,j_n}.
   \end{array}
   \right.
\end{equation}
Then we define  estimators
 \begin{eqnarray*}
   \hat\gamma_{n,t}^* & := & \inter{\hat\gamma_n}_{n,t},\\
   \hat a_{n,t}^*(n/k_n) & := & \inter{\hat a_n(n/k_n)}_{n,t},\\
   \hat U_{n,t}^*(n/k_n) & := & \inter{\hat U_n(n/k_n)}_{n,t}.
 \end{eqnarray*}

We show in Theorem \ref{theo:simple} that asymptotically these ``interpolation estimators'' behave in the same way as the original ones if the functions to be estimated are smooth and the points of observations are sufficiently dense.

Throughout the remainder of the paper, we assume that
$$ \delta_n := \max_{1\le j\le j_n+1} (t_{n,j}-t_{n,j-1})\to 0 $$
as $n$ tends to 0 with $t_{n,0}:=0$ and $t_{n,j_n+1}:=1$. Moreover, we use the notation $\sup_{|s-t|\le\delta_n}$ as a shorthand for $\sup_{s,t\in[0,1], |s-t|\le\delta_n}$.

\begin{theorem} \label{theo:simple}
  If condition (E($\lambda_n$)) holds and
  \begin{eqnarray}
    \sup_{|s-t|\le\delta_n}|\gamma_s-\gamma_t| & = & o(\lambda_n)  \label{eq:gammasmooth}\\
    \sup_{|s-t|\le\delta_n}\Big|\frac{a_s(n/k_n)}{a_t(n/k_n)}-1\Big| & = & o(\lambda_n)  \label{eq:asmooth}\\
    \sup_{|s-t|\le\delta_n}\Big|\frac{U_s(n/k_n)-U_t(n/k_n)}{a_t(n/k_n)}\Big| & = & o(\lambda_n)  \label{eq:Usmooth}
  \end{eqnarray}
  then
\begin{eqnarray}
  \sup_{t\in[0,1]} \big|\lambda_n^{-1}(\hat\gamma_{n,t}^*-\gamma_t)-\Gamma_t \big| \,\stackrel{(P)}{\longrightarrow}\, 0 \label{eq:dgammaconv}\\
  \sup_{t\in[0,1]} \bigg|\lambda_n^{-1}\Big(\frac{\hat a_{n,t}^*(n/k_n)}{a_t(n/k_n)}-1\Big)-A_t \bigg| \,\stackrel{(P)}{\longrightarrow}\, 0 \label{eq:daconv}\\
  \sup_{t\in[0,1]} \bigg|\lambda_n^{-1}\frac{\hat U_{n,t}^*(n/k_n)-U_t(n/k_n)}{a_t(n/k_n)}-B_t \bigg| \,\stackrel{(P)}{\longrightarrow}\, 0. \label{eq:dUconv}
\end{eqnarray}
\end{theorem}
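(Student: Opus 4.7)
The strategy is, for each of the three marginal parameters $\gamma$, $a(n/k_n)$ and $U(n/k_n)$, to decompose the interpolated error into (a) an interpolation of the underlying estimation errors at the grid points, which is handled by condition (E($\lambda_n$)) together with the continuity of the limit processes, and (b) a purely deterministic bias caused by interpolating the target function instead of evaluating it directly at $t$, which is controlled by (\ref{eq:gammasmooth})--(\ref{eq:Usmooth}). I work throughout on the common probability space on which the versions in (E($\lambda_n$)) are defined, so the joint convergence of (\ref{eq:dgammaconv})--(\ref{eq:dUconv}) will follow automatically from the three marginal arguments. Two general facts will be used repeatedly: the map $z\mapsto \langle z\rangle_{n,t}$ is, for each fixed $t$, a convex combination of two values of $z$ on a $\delta_n$-window around $t$, hence linear in $z$ and a contraction for the sup-norm; and any almost-surely continuous process on $[0,1]$ is almost-surely uniformly continuous, so $\sup_t|\langle W\rangle_{n,t}-W_t|\to 0$ a.s.\ for $W\in\{\Gamma,A,B\}$ as $\delta_n\to 0$.

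For $\hat\gamma^{*}_{n,t}$ the decomposition is cleanest because there is no scale in the denominator. By linearity of $\langle\cdot\rangle_{n,t}$,
\begin{equation*}
\lambda_n^{-1}(\hat\gamma^{*}_{n,t}-\gamma_t)
=\bigl\langle\lambda_n^{-1}(\hat\gamma_n-\gamma)\bigr\rangle_{n,t}
+\lambda_n^{-1}\bigl(\langle\gamma\rangle_{n,t}-\gamma_t\bigr).
\end{equation*}
The second summand is bounded in absolute value by $\lambda_n^{-1}\sup_{|s-t|\le\delta_n}|\gamma_s-\gamma_t|=o(1)$ by (\ref{eq:gammasmooth}). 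For the first, (\ref{eq:gammaconv}) gives $\lambda_n^{-1}(\hat\gamma_n-\gamma)=\Gamma+o_P(1)$ in sup-norm, so the contraction property yields $\langle\lambda_n^{-1}(\hat\gamma_n-\gamma)\rangle_{n,t}=\langle\Gamma\rangle_{n,t}+o_P(1)$ uniformly in $t$, and $\langle\Gamma\rangle_{n,t}\to\Gamma_t$ uniformly by continuity of $\Gamma$.

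The proofs of (\ref{eq:daconv}) and (\ref{eq:dUconv}) follow the same template but additionally require moving the normalising factor $1/a_t(n/k_n)$ inside the interpolation. For $j$ such that $t\in(t_{n,j-1},t_{n,j}]$ and $s\in\{t_{n,j-1},t_{n,j}\}$,
\begin{equation*}
\frac{\hat a_{n,s}(n/k_n)}{a_t(n/k_n)}
=\Bigl(1+\lambda_n A_s+o_P(\lambda_n)\Bigr)\cdot\frac{a_s(n/k_n)}{a_t(n/k_n)},
\end{equation*}
the first factor coming from (\ref{eq:aconv}) uniformly in $s$, the second equalling $1+o(\lambda_n)$ uniformly by (\ref{eq:asmooth}) since $|s-t|\le\delta_n$. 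Forming the convex combination across $s\in\{t_{n,j-1},t_{n,j}\}$, subtracting $1$ and dividing by $\lambda_n$ yields $\langle A\rangle_{n,t}+o_P(1)$, which converges uniformly to $A_t$ by continuity of $A$. For $\hat U^{*}_{n,t}$ one splits
\begin{equation*}
\frac{\hat U^{*}_{n,t}(n/k_n)-U_t(n/k_n)}{a_t(n/k_n)}
=\frac{\langle \hat U_n(n/k_n)-U(n/k_n)\rangle_{n,t}}{a_t(n/k_n)}
+\frac{\langle U(n/k_n)\rangle_{n,t}-U_t(n/k_n)}{a_t(n/k_n)}.
\end{equation*}
The second summand is bounded by $\sup_{|s-t|\le\delta_n}|U_s(n/k_n)-U_t(n/k_n)|/a_t(n/k_n)=o(\lambda_n)$ by (\ref{eq:Usmooth}); in the first, one again swaps $a_t(n/k_n)$ for $a_s(n/k_n)$ at each node $s\in\{t_{n,j-1},t_{n,j}\}$ using (\ref{eq:asmooth}) at cost $1+o(\lambda_n)$, applies (\ref{eq:Uconv}) to replace the resulting ratio by $\lambda_n B_s+o_P(\lambda_n)$, and concludes via the convex combination and continuity of $B$ that the whole expression is $\lambda_n(\langle B\rangle_{n,t}+o_P(1))\to\lambda_n B_t$ in the required sense.

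The only step that is not entirely routine bookkeeping is the exchange of the denominator $a_t(n/k_n)$ for the local scale $a_{t_{n,j\pm 1}}(n/k_n)$ at each grid point; this is exactly why smoothness of $a(n/k_n)$ enters both the $\hat a^{*}_{n,t}$- and the $\hat U^{*}_{n,t}$-statement via (\ref{eq:asmooth}). Once this ratio has been controlled to within $1+o(\lambda_n)$, the linearity and contractivity of $\langle\cdot\rangle_{n,t}$, together with the uniform continuity of the paths of $\Gamma$, $A$ and $B$, reduce each claim to a direct application of the corresponding part of (E($\lambda_n$)) at the grid points.
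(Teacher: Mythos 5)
Your proof is correct and takes essentially the same route as the paper's: at each of the two neighboring grid points you decompose the interpolated error into the estimation error controlled by (E($\lambda_n$)), the deterministic interpolation bias controlled by \eqref{eq:gammasmooth}--\eqref{eq:Usmooth}, a scale-mismatch ratio $a_s(n/k_n)/a_t(n/k_n)$ controlled by \eqref{eq:asmooth}, and the continuity error of the limit process, then pass to the convex combination. Packaging the interpolation $\langle\cdot\rangle_{n,t}$ as a linear sup-norm contraction is a slightly tidier way of organizing exactly the term-by-term computation the paper carries out (there only for $\hat U^*_{n,t}$, leaving the other two cases to analogous arguments, which you instead spell out).
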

It is easily seen that this result carries over to more refined interpolation schemes, e.g.\ using splines. A close inspection of its proof reveals that one can also generalize the result to multivariate index sets if the following two conditions are fulfilled. First,
an estimator at an arbitrary point $t$ should be a weighted average (with bounded weights) of the corresponding estimators at grid points in a certain neighborhood of $t$. Second, similarly as in \eqref{eq:gammasmooth}--\eqref{eq:Usmooth}, the local fluctuations of the functions $\gamma_\cdot$, $a_\cdot(n/k_n)$ and $U_\cdot(n/k_n)$ over the neighborhoods used in the interpolation scheme must be of smaller order than $\lambda_n$.

Note that, for $\lambda_n\equiv 1$, condition \eqref{eq:gammasmooth} is automatically fulfilled by the continuity of $(\gamma_t)_{t\in[0,1]}$. In contrast, \eqref{eq:asmooth} and \eqref{eq:Usmooth} need not be fulfilled and then the assertions need not hold, as the following example shows.

\begin{example}  \label{ex:counter}
  Let $V_i$, $1\le i\le n$, be iid standard Pareto random variables, i.e.\ $P\{V_i>x\}=x^{-1}$ for all $x\ge 1$, and define $X_t^{(i)}:=V_i^{\gamma_t}$ for some continuous function $t\mapsto\gamma_t>0$. Then obviously $F_t(x)=1-x^{-1/\gamma_t}$, $x\ge 1$, belongs to the domain of attraction of $G_{\gamma_t}$, and one may choose $a_t(y)=\gamma_tU_t(y)=\gamma_ty^{\gamma_t}$ for all $y>0$. Moreover, $\xi_t^{(i)} =1/(1-F_t(X^{(i)}))=V_i$ for all $t\in[0,1]$, and thus \eqref{eq:normconv} trivially holds with $Z_t=Z_0$ for a unit Fr\'{e}chet random variable $Z_0$.

  Now
  $$ \frac{U_s(n/k_n)-U_t(n/k_n)}{a_t(n/k_n)} = \frac 1{\gamma_t} \Big((n/k_n)^{\gamma_s-\gamma_t}-1\Big) $$
  tends to 0 uniformly if and only if
  \begin{equation} \label{eq:gammarate}
    \sup_{|s-t|\le\delta_n} |\gamma_s-\gamma_t| = o\big(1/\log(n/k_n)\big).
  \end{equation}
   It is easily seen that also \eqref{eq:asmooth} is equivalent to \eqref{eq:gammarate}.

   Now to check consistency (in the sense of \eqref{eq:dUconv} with $\lambda_n\equiv 1$ and $B\equiv 0$) of the estimator $\hat U_{n,t}^*(n/k_n)=X^{(n-k_n:n)}_{<t>_n}$ (see \eqref{eq:Udef})  note that for $t_{n,j-1}<t\le t_{n,j}$
  \begin{eqnarray}
     \frac{\hat U_{n,t}^*(n/k_n)-U_t(n/k_n)}{a_t(n/k_n)}
      & = & \frac 1{\gamma_t} \bigg[ \frac{t_{n,j}-t}{t_{n,j}-t_{n,j-1}}\bigg(\Big(\frac{k_n}n V_{n-k_n:n}\Big)^{\gamma_{t_{n,j-1}}} \Big(\frac{k_n}n\Big)^{\gamma_t-\gamma_{t_{n,j-1}}}-1\bigg)  \nonumber\\
      & & \hspace{0.5cm} +
       \frac{t-t_{n,j-1}}{t_{n,j}-t_{n,j-1}}\bigg(\Big(\frac{k_n}n V_{n-k_n:n}\Big)^{\gamma_{t_{n,j}}} \Big(\frac{k_n}n\Big)^{\gamma_t-\gamma_{t_{n,j}}}-1\bigg)\bigg].\hspace*{1cm} \label{eq:counterex}
  \end{eqnarray}

  Because $(k_n/n)V_{n-k_n:n}\to 1$ in probability, $\hat U_{n,t}^*(n/k_n)$ is consistent if
  $\max\big(|\gamma_t-\gamma_{t_{n,j}}|,|\gamma_t-\gamma_{t_{n,j-1}}|\big)=o(1/\log(n/k_n))$. In contrast, if e.g.\ for $t=t_{n,j-1}+c(t_{n,j}-t_{n,j-1})$ (for some $c\in (0,1)$)
  $\log(n/k_n)(\gamma_t-\gamma_{t_{n,j-1}})\to -\infty$, then the right-hand side of \eqref{eq:counterex} tends to $\infty$. In particular, in this case $\hat U_{n,\cdot}^*(n/k_n)$ is not uniformly consistent.

  So, roughly speaking, one needs \eqref{eq:gammasmooth} to hold with $\lambda_n=1/\log(n/k_n)$ to ensure \eqref{eq:dUconv} with $\lambda_n\equiv 1$.
\end{example}

\begin{example}  \label{ex:expgauss}
  Secondly, we consider a generalization of an example examined by Einmahl and Lin (2006). Let $Z=(Z_t)_{t\in[0,1]}$ be a centered Gaussian process such that
  \begin{equation} \label{eq:vardiff}
    E\big((Z_s-Z_t)^2\big)\le C_1|s-t|^{\alpha_1}, \quad \forall s,t\in[0,1],
  \end{equation}
  for some constants $C_1>0$ and $\alpha_1>0$. Moreover, let $t\mapsto \gamma_t$ be a positive function such that $|\gamma_s-\gamma_t|\le C_2|s-t|^{\alpha_2}$ for some $C_2,\alpha_2>0$. For a standard Pareto random variable $Y$ (i.e.\ $P\{Y>x\}=x^{-1}$ for $x>1$) independent of $Z$ define $X_t:= Y^{\gamma_t}\e^{Z_t}$, $t\in [0,1]$.

  It is well known that, under the above conditions, $Z$ has continuous sample paths with \linebreak  $P\{\sup_{t\in[0,1]} Z_t/\gamma_t>x\}\le \exp(-cx^2)$ for some $c>0$ and sufficiently large $x$ (see, e.g., Adler, 1990, Theorem 1.4 and (2.4)). In particular, $E\big(\sup_{t\in[0,1]}\e^{Z_t/\gamma_t}\big)<\infty$. Hence, the example investigated by Einmahl and Lin (2006), pp.\ 477 f., shows that for iid copies $(Y^{(i)},Z^{(i)})$ of $(Y,Z)$
  $$  \bigg(\max_{1\le i\le n}\frac{ Y^{(i)}\exp(Z_t^{(i)}/\gamma_t)}{E\big(\exp(Z_t^{(i)}/\gamma_t)\big)n}\bigg)_{t\in[0,1]} \to \eta
  $$
  for some simple max-stable limit process $\eta$ (i.e., with unit Fr\'{e}chet marginals). Now, the continuous mapping theorem yields convergence (1.1) towards $(\eta_t^{\gamma_t})_{t\in[0,1]}$.

  Let $\sigma_t^2:= Var(Z_t)$. Straightforward calculations show that,  for all $M>0$,
  \begin{eqnarray*}
    P\{X_t>u\} & = & \int P\big\{ Y>u^{1/\gamma_t}\e^{-z/\gamma_t} \big\}\, P^{Z_t}(dz)\\
    & = & u^{-1/\gamma_t} \exp\big(\sigma_t^2/(2\gamma_t^2)\big)\Phi\Big(\frac{\log u}{\sigma_t}-\frac{\sigma_t}{\gamma_t}\Big) + 1-\Phi\Big(\frac{\log u}{\sigma_t}\Big) \\
    & = & u^{-1/\gamma_t} \exp\big(\sigma_t^2/(2\gamma_t^2)\big) + o(u^{-M})
  \end{eqnarray*}
  uniformly for all $t\in[0,1]$ as $u\to\infty$. It can easily be concluded that, for all $\kappa>0$, one has for sufficiently large $x$
  $$ \sup_{t\in[0,1]}  \big|U_t(x)-c_t x^{\gamma_t}\big|\le x^{-\kappa}
  $$
  with $c_t:=\exp(\sigma_t^2/(2\gamma_t))$.

  Since $\gamma_t$ is assumed positive, we can thus choose $a_t(n/k_n)=\gamma_tc_t(n/k_n)^{\gamma_t}$.
  Therefore, it can be shown in a similar way as in Einmahl and Lin (2006) that the estimators \eqref{eq:gammadef}--\eqref{eq:Udef} satisfy condition $E(k_n^{-1/2})$ provided $k_n=o(n^{1-\eps})$ for some $\eps>0$.

  Note that
  $|\sigma_t^2-\sigma_s^2|=\big|E\big((Z_t-Z_s)(Z_t+Z_s)\big)|\le C_3 |t-s|^{\alpha_1/2} $
  for some $C_3>0$ by \eqref{eq:vardiff} and the Cauchy-Schwarz inequality. Hence also $t\mapsto c_t$ is H\"{o}lder continuous with exponent $\alpha:=\min(\alpha_1/2,\alpha_2)$.

  Next we derive a condition on $\delta_n$ which ensures that \eqref{eq:gammasmooth}--\eqref{eq:Usmooth} hold with $\lambda_n=k_n^{-1/2}$.
  Check that for an arbitrarily large $\kappa>0$ one has eventually
  $$ \sup_{|s-t|\le\delta_n}\Big| \frac{U_s(n/k_n)-U_t(n/k_n)}{a_t(n/k_n)}\Big| \le
    \frac{\sup_{|s-t|\le\delta_n} c_s\big|(n/k_n)^{\gamma_s-\gamma_t}-1\big| + |c_s-c_t|
     + 2 (n/k_n)^{-\kappa}}{\inf_{t\in[0,1]} c_t\gamma_t}.
  $$
  Thus, by the H\"{o}lder condition on $\gamma_\cdot$,
  the first term in the numerator is of smaller order than $k_n^{-1/2}$ if
  $\delta_n^{\alpha_2}\log(n/k_n)=o(k_n^{-1/2})$ or, equivalently,
  $
    \delta_n=o\big(k_n^{-1/(2\alpha_2)}(\log n)^{-1/\alpha_2}\big).
  $
  By the H\"{o}lder continuity of $c_\cdot$, the second term is negligible if $\delta_n=o(k_n^{-1/(2\alpha)})$.
  Since $n^\eps=o(n/k_n)$ and $\kappa$ can be chosen larger than $1/\eps$, condition \eqref{eq:Usmooth} thus holds if
  \begin{equation} \label{eq:deltancond}
    \delta_n=o\big(\min\big(k_n^{-1/(2\alpha_2)}(\log n)^{-1/\alpha_2},k_n^{-1/\alpha_1}\big)\big).
  \end{equation}
  Condition \eqref{eq:asmooth} reads as
  $$\sup_{|s-t|\le\delta_n}\Big|(n/k_n)^{\gamma_s-\gamma_t}\frac{c_s\gamma_s}{c_t\gamma_t}-1\Big| = o(k_n^{-1/2}). $$
  Again by the H\"{o}lder continuity of $\gamma_\cdot$ and $c_\cdot\gamma_\cdot$ with exponents $\alpha_2$ and  $\alpha$, respectively, under condition \eqref{eq:deltancond} one has $(n/k_n)^{\gamma_s-\gamma_t}=1+o(k_n^{-1/2})$ and $c_s\gamma_s/(c_t\gamma_t)=1+o(k_n^{-1/2})$ uniformly for $|s-t|\le\delta_n$, and thus \eqref{eq:asmooth} holds.
  Finally, in view of the H\"{o}lder continuity of $\gamma_\cdot$, \eqref{eq:deltancond} obviously also implies \eqref{eq:gammasmooth}.

  Therefore, one may conclude that for sampling schemes such that \eqref{eq:deltancond} is fulfilled the interpolated marginal estimators asymptotically behave in the same way as the estimators considered by Einmahl and Lin (2006).
\end{example}

Theorem \ref{theo:simple} gives sufficient conditions in terms of the smoothness of the marginal functions $\gamma_\cdot$, $a_\cdot(n/k_n)$ and $U_\cdot(n/k_n)$ which ensure that the asymptotic behavior of the marginal estimators carry over to their discretized versions.
In what follows, we replace these purely analytical conditions with two different assumptions which may sometimes be easier to interpret. The first condition quantifies the accuracy of the GPD approximation to the marginal tails, while the second is a smoothness condition on the sample paths in extreme regions. \\[1ex]
{\bf (M($\boldsymbol{\lambda}_{\mathbf{n}}$))}\; \; \; \parbox[t]{14.1cm}{For all $0<y_0<y_1<\infty$
$$ \sup_{t\in[0,1]} \sup_{y\in[y_0,y_1]} \Big| \frac{U_t(yn/k_n)-U_t(n/k_n)}{a_t(n/k_n)} - \frac{y^{\gamma_t}-1}{\gamma_t}\Big| = o(\lambda_n).
$$}\\[1ex]
In the case $\lambda_n\equiv 1$, condition $M(1)$ follows from \eqref{eq:maxconv} and is thus automatically fulfilled in our setting (see de Haan and Ferreira, 2006, Section 9.2).

In what follows, $X$ denotes a process with the same distribution as $X^{(1)}$.\\[1ex]
{\bf (S($\boldsymbol{\lambda}_{\mathbf{n}}$))}\; \; \; \parbox[t]{14.4cm}{There exists a constant $\tau<\tau_{\max}:=\inf_{t\in[0,1]}1/\gamma_t^-$ (i.e.\, $\tau_{\max}=\infty$ if $\gamma\ge 0$ and $\tau_{\max}=1/|\gamma|$ else) such that for all $\eps>0$
$$ \sup_{|s-t|\le \delta_n} P\Big\{ \frac{|X_s-X_t|}{a_t(n/k_n)}>\eps\lambda_n, X_t>U_t(n/k_n)+\tau a_t(n/k_n)\Big\} = o(\lambda_n k_n/n).
$$}\\[1ex]
Note that $P\{X_t>U_t(n/k_n)+\tau a_t(n/k_n)\}\sim (k_n/n) (1+\gamma_t\tau)^{-1/\gamma_t}$. Hence, condition (S(1)) states that the fluctuations of the process in a neighborhood (of the size of the maximal grid width) of some point $t$ where the process is large are of smaller order than the random variability (represented by the scale function $a_t$) at this point. If $\lambda_n$ tends to 0, condition (S($\lambda_n$)) restricts the fluctuations further.
\begin{theorem} \label{theo:mainmarg}
  Assume that $n/k_n=h(n)$ for some function $h$ which is regularly varying with an index $\kappa\in (0,1]$, and that $k_{n-1}/k_n-1=o(\lambda_n)$ and $\sup_{c\in[c_0,1]} \lambda_{\floor{cn}}/\lambda_n=O(1)$ for all $c_0>0$.
  Then, under the
   conditions (M($\lambda_n$)) and (S($\lambda_n$)),
  \begin{equation} \label{eq:Uapprox}
    \sup_{|s-t|\le\delta_n} \sup_{y\in[\underline y,\bar y]} \Big| \frac{U_s(yn/k_n)-U_t(n/k_n)}{a_t(n/k_n)} - \frac{y^{\gamma_t}-1}{\gamma_t}\Big| = o(\lambda_n)
  \end{equation}
  for all $0<\underline y<\bar y<\infty$, and \eqref{eq:gammasmooth}--\eqref{eq:Usmooth} hold.
\end{theorem}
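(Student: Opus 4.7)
I would proceed in three stages: first establish the approximation \eqref{eq:Uapprox} on a restricted range $y \in [y_\star, \bar y]$ by a tail-probability comparison; then extract \eqref{eq:gammasmooth}--\eqref{eq:Usmooth} from this restricted version by elementary algebra; and finally extend \eqref{eq:Uapprox} to arbitrary $\underline{y}>0$ using (M($\lambda_n$)) at both $s$ and $t$ together with the smoothness conditions just obtained.

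For the first stage, I would fix $y_\star$ large enough that $(y_\star^{\gamma_t}-1)/\gamma_t > \tau$ for every $t\in[0,1]$, where $\tau$ is the constant in (S($\lambda_n$)). For $y \in [y_\star, \bar y]$, set $x = U_t(yn/k_n) + c\lambda_n a_t(n/k_n)$ and $\eta = \varepsilon \lambda_n a_t(n/k_n)$ with $0 < \varepsilon < c$. The inclusion
\[
\{X_s > x\} \;\subseteq\; \{X_t > x-\eta\} \cup \{X_s - X_t > \eta,\, X_s > x\}
\]
reduces the task to bounding the probability of the second event by $o(\lambda_n k_n/n)$. I would split this event according to whether $X_t > U_t(n/k_n)+\tau a_t(n/k_n)$ or not: the high-$X_t$ part is $o(\lambda_n k_n/n)$ directly by (S($\lambda_n$)), while the low-$X_t$ part is bounded by the symmetric form of (S($\lambda_n$)) (valid because its condition is phrased through a supremum over $|s-t|\le\delta_n$) applied to the event $\{X_s > U_s(n/k_n) + \tau a_s(n/k_n)\}$. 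Applying (M($\lambda_n$)) to $P\{X_t > x-\eta\}$ turns the shift by $\eta$ into an $O(\lambda_n)$ change of the quantile index; its effect on the probability is $O(\lambda_n k_n/n)$ thanks to the regular variation of $n/k_n$ and the stability of $\lambda_n$. Combined with the matching lower bound obtained by interchanging $s$ and $t$, this yields the restricted \eqref{eq:Uapprox}.

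The main obstacle is the symmetric application of (S($\lambda_n$)): to deduce $X_s > U_s(n/k_n) + \tau a_s(n/k_n)$ from $X_s > x$ one needs preliminary crude bounds $|U_s(n/k_n) - U_t(n/k_n)| = O(a_t(n/k_n))$ and $a_s(n/k_n)/a_t(n/k_n) = O(1)$. I would obtain these via a \emph{bootstrap}, running the same truncation argument with $O(1)$-sized constants in place of the $o(\lambda_n)$-sized $\eta$, which furnishes the rough comparison from (S($\lambda_n$)) alone; the conditions $k_{n-1}/k_n-1 = o(\lambda_n)$ and $\sup_{c\in[c_0,1]}\lambda_{\lfloor cn\rfloor}/\lambda_n=O(1)$ serve to absorb sampling-index perturbations throughout. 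Once the restricted \eqref{eq:Uapprox} is proved, evaluating it at $y_1 = y_\star$ and $y_2 = y_\star^2$ and subtracting the GPD expansion $U_s(y_i n/k_n) = U_s(n/k_n) + a_s(n/k_n)(y_i^{\gamma_s}-1)/\gamma_s + o(\lambda_n a_s(n/k_n))$ supplied by (M($\lambda_n$)) at $s$ produces two identities in $a_s(n/k_n)/a_t(n/k_n)$ and $\gamma_s$; dividing cancels the scale factor and gives $y_\star^{\gamma_s} = y_\star^{\gamma_t} + o(\lambda_n)$, whence \eqref{eq:gammasmooth}, from which \eqref{eq:asmooth} and then \eqref{eq:Usmooth} follow by back-substitution. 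Finally, combining \eqref{eq:gammasmooth}--\eqref{eq:Usmooth} with (M($\lambda_n$)) applied at both $s$ and $t$ extends \eqref{eq:Uapprox} to any $\underline y > 0$ via a Taylor expansion in $\gamma_s - \gamma_t$ and $a_s(n/k_n)/a_t(n/k_n) - 1$.
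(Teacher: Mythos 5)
Your plan closely tracks the paper's proof in its first stage (tail-probability comparison to get the restricted approximation), but then reorders the remaining steps, which is a genuinely different route. The paper proceeds: restricted \eqref{eq:Uapprox} (for $\underline y$ large) $\to$ general \eqref{eq:Uapprox} via a shift-index $m_n$ construction exploiting the regular variation of $h$ $\to$ \eqref{eq:Usmooth} at $y=1$ $\to$ \eqref{eq:gammasmooth} and \eqref{eq:asmooth}. You propose: restricted \eqref{eq:Uapprox} $\to$ all three smoothness conditions by evaluating at powers of $y_\star$ $\to$ general \eqref{eq:Uapprox} by Taylor expansion in $\gamma_s-\gamma_t$. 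If your version closed, it would bypass the $m_n$ device entirely, and since the regular-variation assumption on $h$ and the side conditions on $k_n,\lambda_n$ are used only there, it would show those hypotheses superfluous. However, there are two gaps that currently prevent the argument from closing.

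First, the ``bootstrap'' for the crude bounds $a_s(n/k_n)/a_t(n/k_n)=O(1)$ and $U_s(n/k_n)+\tilde\tau a_s(n/k_n)\ge U_t(n/k_n)+\tau a_t(n/k_n)$ is hand-waved, and the naive implementation is circular. Condition (S($\lambda_n$)) truncates on $\{X_t\ \text{large}\}$; the problematic part of your split is $\{X_s>x,\ X_t\le U_t+\tau a_t,\ |X_s-X_t|>\eta\}$, and to apply the $s\leftrightarrow t$--swapped version of (S($\lambda_n$)) to it you must first convert $\{X_s>x\}$ into $\{X_s>U_s+\tau a_s\}$ and convert $\eta/a_t$ into a constant times $\eta/a_s$ --- which requires precisely the crude bounds you are trying to prove. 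The paper's Lemma \ref{lemma:ramification} breaks this circle by a contradiction argument using only the \emph{forward} (S($\lambda_n$)) (truncating on $X_{t_n}$) together with (M($\lambda_n$)): assuming \eqref{eq:lowbd} fails along a sequence, it builds a chain of quantile inequalities whose endpoints converge to different constants, a contradiction. Your bootstrap would need to reproduce something of this structure; ``run the same truncation with $O(1)$ constants'' does not, on its own, escape the circularity.

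Second, two evaluation points $y_\star$ and $y_\star^2$ do not suffice to extract \eqref{eq:gammasmooth}. Writing $D_n=(U_s(n/k_n)-U_t(n/k_n))/a_t(n/k_n)$, $R_n=a_s(n/k_n)/a_t(n/k_n)$ and $f_u(y)=(y^{\gamma_u}-1)/\gamma_u$, your two identities read $D_n+R_n f_s(y_\star^i)=f_t(y_\star^i)+o(\lambda_n)$, $i=1,2$; dividing them does not cancel the unknown $D_n$. You need either a third point, so that the telescoped differences $R_n\bigl(f_s(y_\star^{i+1})-f_s(y_\star^i)\bigr)=f_t(y_\star^{i+1})-f_t(y_\star^i)+o(\lambda_n)$, $i=1,2$, divide to give $y_\star^{\gamma_s}=y_\star^{\gamma_t}(1+o(\lambda_n))$ and hence \eqref{eq:gammasmooth}, or you must establish $D_n=o(\lambda_n)$ first --- which is exactly \eqref{eq:Usmooth}, obtained in the paper from the already-extended \eqref{eq:Uapprox} at $y=1$. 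This is a fixable slip, but as written the ``dividing cancels the scale factor'' step does not go through.
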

If the condition on the regular variation of $n/k_n$ is fulfilled and $\lambda_n=1$, then the second condition on $k_n$ is automatically fulfilled. Likewise, the condition on $\lambda_n$ follows from the regular variation if $\lambda_n=k_n^{-1/2}$, and it is trivial if $\lambda_n\equiv 1$.

\begin{remark} \rm
  Usually, estimators of the functions $\gamma_\cdot, a_\cdot(n/k_n)$ and $U_\cdot (n/k_n)$ are not of interest of their own, but they are instrumental in estimating parameters with an operational meaning, like extreme quantiles. For example, assume that one wants to determine the threshold at point $t$ which is exceeded with a very small probability $p_n=o(k_n/n)$, that is, we want to estimate $U_t(1/p_n)$. (In environmetrics, such an exceedance is often interpreted as a $1/(mp_n)$-year event if $m$ observations $X^{(i)}$ are made each year.)

  If the full processes $X^{(i)}$ are observed, then a popular estimator is
  $$ \hat x_{n,t} := \hat U_{n,t}(n/k_n) + \hat a_{n,t}(n/k_n)\frac{(np_n/k_n)^{-\hat\gamma_{n,t}}-1}{\hat\gamma_{n,t}}, \quad t\in[0,1].
  $$
  Under condition $E(k_n^{-1/2})$ with a Gaussian limiting process $(\Gamma,A,B)$ and the additional condition
  $$ \sup_{t\in[0,1]}\Big|\frac{U_t(1/p_n)-U_t(n/k_n)}{a_t(n/k_n)}
     -\frac{(np_n/k_n)^{-\gamma_t}-1}{\gamma_t} \Big| = o(k_n^{-1/2})
  $$
  the uniform asymptotic normality of $\hat x_{n,t}$, $t\in[0,1]$, can be  concluded by standard methods; see e.g.\ Drees (2003), Theorem 6.2, for similar calculations for fixed $t$.

  In contrast, if the processes $X^{(i)}$ are discretely observed as discussed before, one may either define the quantile estimator analogously by replacing the marginal estimators with the interpolated counterparts, i.e.\ define
  $$ \hat x_{n,t}^* := \hat U_{n,t}^*(n/k_n) + \hat a_{n,t}^*(n/k_n)\frac{(np_n/k_n)^{-\hat\gamma_{n,t}^*}-1}{\hat\gamma_{n,t}^*}, \quad t\in[0,1],
  $$
  or one interpolates the quantile estimators between the observed points, that is, one considers
  $ \langle \hat x_{n,\cdot} \rangle_{n,t}$, $t\in[0,1]$. By lengthy, but simple calculations it can be concluded from Theorem \ref{theo:mainmarg} that, under the conditions given there, both estimators asymptotically  behave as the original estimator $ \hat x_{n,t}$, uniformly for $t\in[0,1]$.
\end{remark}

\subsection{Estimating the exponent measure}

For $u>0$ and Borel sets $E\subset C[0,1]$, let $\nu_u:=uP\{u^{-1}\xi^{(1)}\in E\}$. It is well known that $\lim_{u\to\infty} \nu_u(E)=\nu(E)<\infty$ for all Borel sets $E\subset C[0,1]$ such that $\inf_{z\in E}\|z\|_\infty>0$ and $\nu(\partial E)=0$ with $\nu$ defined in \eqref{eq:nudef}. (Here $\partial E$ denotes the topological boundary of $E$.) If the processes $\xi^{(i)}$ are observable, then one may estimate $\nu(E)$ by the following empirical counterpart of $\nu_{n/k_n}(E)$:
$$ \bar\nu_{n/k_n}(E) := \frac 1k \sum_{i=1}^n 1_{\textstyle \{k_n\xi^{(i)}/n\in E\}}.
$$
However, usually the marginal cdf's $F_t$ are unknown and must thus be replaced with suitable estimators in the definition of $\xi^{(i)}_t$ so that the resulting processes
$$ \hat\xi^{(i)}_t := \frac 1{1-\hat F_t(X_t^{(i)})}, \quad t\in[0,1], i\in\N,
$$
are continuous. For example, if $(\hat\gamma_{n,t})_{t\in[0,1]}$, $(\hat a_{n,t}(n/k_n))_{t\in[0,1]}$ and $(\hat U_{n,t}(n/k_n))_{t\in[0,1]}$ are consistent estimators of $(\gamma_t)_{t\in[0,1]}$, $(a_t(n/k_n))_{t\in[0,1]}$ and $(U_t(n/k_n))_{t\in[0,1]}$, respectively,  with continuous sample paths then one may consider
$$ \hat F_t(x) := \frac{n}{k_n} \bigg( 1+\hat\gamma_{n,t}\max\Big(\frac{x-\hat U_{n,t}(n/k_n)}{\hat a_{n,t}(n/k_n)}, -\frac 1{\hat\gamma_{n,t}^+}\Big)\bigg)^{1/\hat\gamma_{n,t}}.
$$
De Haan and Lin (2003) proved that the resulting estimator
\begin{equation}  \label{eq:hatnunkdef}
  \hat\nu_{n,k_n}(\cdot) := \frac 1k \sum_{i=1}^n 1_{\textstyle \{k_n\hat\xi^{(i)}/n\in\cdot\}}
\end{equation}
is consistent for $\nu$ if one uses the marginal estimators defined in \eqref{eq:gammadef}--\eqref{eq:Udef}. By consistency we mean that
$$ \hat\nu_{n,k_n}(E)  \,\stackrel{(P)}{\longrightarrow}\, \nu(E) $$
for all Borel sets $E\subset C[0,1]$ such that $\inf_{z\in E}\|z\|_\infty >0$ and $\nu(\partial E)=0$.
According to Daley and Vere-Jones (2008), Theorem 11.1.VII, and Daley and Vere-Jones (2003), Corollary A2.5.II, this is equivalent to
$$ d_c\big(\hat\nu_{n,k_n}|_{D_c},\nu|_{D_c}\big) \,\stackrel{(P)}{\longrightarrow}\, 0, \quad \forall\, c>0,
$$
where
$$ D_c:=\{z\in C[0,1]\mid \|z\|_\infty>c\} $$
and the distance between two measures $\mu,\tilde\mu$ on the Borel sets of $D_c$ is defined as
$$ d_c(\mu,\tilde \mu) := \inf\big\{\eps>0\mid \mu(F)\le\tilde\mu(F^\eps)+\eps, \tilde\mu(F)\le \mu(F^\eps)+\eps \text{ for all closed sets } F\subset D_c\big\}
$$
with
$$ F^\eps := \{ z\in C[0,1] \mid \|z-\tilde z\|\le \eps \text{ for some } \tilde z\in F\}.
$$

If the processes $X^{(i)}$ are only observed in the points $t_{n,j}$, $1\le j\le j_n$, then again one must apply some interpolation technique to estimate the exponent measure. As in Subsection 2.1, we discuss linear interpolation for general estimators of the exponent measure, but Theorem \ref{theo:expmeas} can easily be extended to more refined methods of smooth interpolation.

In what follows, we assume that a sequence of random measures $\hat\nu_n$ is given which is  consistent for $\nu$. We then define
$$ \hat\nu_n^* (E) := \hat\nu_n\{ z\in C[0,1] \mid \inter{z}_n\in E\}
$$
with $\inter{z}_n$ given in \eqref{eq:interdef}. For example, for $\hat\nu_{n,k_n}$ as in \eqref{eq:hatnunkdef} we obtain
$$ \hat\nu_{n,k_n}^*(\cdot) = \frac 1k \sum_{i=1}^n 1_{\textstyle
\{k_n\inter{\hat\xi^{(i)}}_n/n\in\cdot\}}.
$$
If the marginal estimators $\hat\gamma_{n,t}, \hat a_{n,t}(n/k_n)$ and $\hat U_{n,t}(n/k_n)$ only depend on $X_t^{(i)}$, $1\le i\le n$, then this estimator $\hat\nu_{n,k_n}^*$depends on the discrete observations only.

Without any further assumptions, consistency carries over from $\hat\nu_n$ to $\hat\nu_n^*$.
\begin{theorem} \label{theo:expmeas}
  If $\hat\nu_n(E)\stackrel{(P)}{\longrightarrow}\nu(E)$ for all Borel sets $E\subset C[0,1]$ such that $\inf\{\|z\|_\infty| z\in E\}>0$ and $\nu(\partial E)=0$, then this convergence also holds for $\hat\nu_n^*$.
\end{theorem}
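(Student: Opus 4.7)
The plan is to combine the Daley--Vere-Jones characterization of consistency recalled just before the theorem with a continuous-mapping argument for the linear interpolation operator $\Pi_n:C[0,1]\to C[0,1]$, $\Pi_n(z):=\langle z\rangle_n$. Two properties of $\Pi_n$ do the work: $\|\Pi_n(z)\|_\infty\le\|z\|_\infty$ (so $\Pi_n^{-1}(D_c)\subset D_c$), and the convergence to the identity
$$ \|\Pi_n(z_n)-z\|_\infty \;\le\; \|z_n-z\|_\infty + \omega_z(\delta_n) \;\longrightarrow\; 0 \quad\text{whenever } z_n\to z \text{ in } C[0,1], $$
where $\omega_z$ denotes the modulus of continuity of $z$, which vanishes with $\delta_n$ by uniform continuity of $z$ on $[0,1]$. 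Via the cited equivalence, the target is to show $\hat\nu_n^*|_{D_c}\to\nu|_{D_c}$ weakly in probability as finite Borel measures (noting $\nu(D_c)=-\log P\{\|Z\|_\infty\le c\}<\infty$), which by density in the bounded Lipschitz metric reduces to proving $\hat\nu_n^*(f)\to\nu(f)$ in probability for every bounded Lipschitz $f:C[0,1]\to\R$ supported in some $D_{c'}$ with $c'>0$.

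Given such an $f$, I would write $\hat\nu_n^*(f)=\hat\nu_n(f\circ\Pi_n)$ (the composition inherits the support property by contractivity) and decompose
$$ \hat\nu_n^*(f)-\nu(f) \;=\; \bigl[\hat\nu_n(f)-\nu(f)\bigr] \;+\; \hat\nu_n(f\circ\Pi_n - f). $$
The first bracket tends to $0$ in probability by the assumed consistency of $\hat\nu_n$. To handle the remainder, fix $\eps>0$, pick $M$ with $\nu(\{\|z\|_\infty>M\})<\eps$ and $\{\|z\|_\infty=M\}$ of $\nu$-measure zero (possible since $M\mapsto\nu(\{\|z\|_\infty>M\})$ has only countably many jumps), and apply tightness of the finite Radon measure $\nu|_{\{c'/2<\|z\|_\infty\le M\}}$ on the Polish space $C[0,1]$ to obtain a compact $K$ whose complement in this annulus carries $\nu$-mass below $\eps$. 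By Arzel\`a--Ascoli the set $K$ is equicontinuous, so $\alpha_n:=\sup_{z\in K}\|\Pi_n(z)-z\|_\infty\le\omega_K(\delta_n)\to 0$. Splitting the remainder over $K$ and its complement in $D_{c'}$ gives a Lipschitz bound of order $\alpha_n\hat\nu_n(K)\to 0$ in probability on the first piece, and a bound proportional to $\hat\nu_n(D_{c'}\setminus K)$ on the second; the latter is controlled by applying consistency of $\hat\nu_n$ to a $\nu$-continuity superset of $D_{c'}\setminus K$ obtained by taking a small closed $\eta$-enlargement of $K$ (with $\eta$ chosen off the countable set of values where the enlargement's boundary would carry $\nu$-mass).

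The main technical obstacle is the bookkeeping of $\nu$-continuity sets: the value $M$, the compact $K$, and its closed enlargement all need $\nu$-null boundaries so that the assumed consistency of $\hat\nu_n$ actually yields the quantitative bounds used above. This is handled by the standard observation that each relevant monotone level function has at most countably many jumps, so continuity values are dense and the auxiliary sets can always be perturbed slightly into $\nu$-continuity sets without losing the measure estimates. A potentially cleaner alternative would be to invoke an extended continuous-mapping theorem for random measures directly, exploiting that $\Pi_n$ converges continuously to the identity on $C[0,1]$ in the sense displayed above.
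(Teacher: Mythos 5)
Your route (reduce to bounded Lipschitz test functions via the Daley--Vere-Jones equivalence, write $\hat\nu_n^*(f)-\nu(f)=[\hat\nu_n(f)-\nu(f)]+\hat\nu_n(f\circ\Pi_n-f)$, and control the remainder by tightness and Arzel\`a--Ascoli) is genuinely different from the paper's, which works directly with the Prohorov-type distance $d_c$ and never invokes compactness. However, the last step of your plan has a real gap. You want to bound $\hat\nu_n(D_{c'}\setminus K)$ by applying the consistency hypothesis to a ``$\nu$-continuity superset of $D_{c'}\setminus K$ obtained by taking a small closed $\eta$-enlargement of $K$.'' Enlarging $K$ shrinks its complement, so a closed $\eta$-enlargement $K^\eta\supset K$ makes $D_{c'}\setminus K^\eta$ a \emph{subset} of $D_{c'}\setminus K$, not a superset. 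More seriously, $C[0,1]$ is infinite dimensional, so every compact $K$ has empty interior; hence $\partial K=K$ and $\nu(\partial K)=\nu(K)$, which by your tightness choice of $K$ is of the same order as $\nu(D_{c'/2})$ and therefore far from $0$. Neither $K$ nor $D_{c'}\setminus K$ is anywhere near a $\nu$-continuity set; the ``countably many jump levels'' remark only lets you perturb scalar thresholds such as $c'$ or $M$, not compact sets, and your plan never actually produces a set $E$ with $\nu(\partial E)=0$, $E\supset D_{c'}\setminus K$ and $\nu(E)$ small to which the hypothesis $\hat\nu_n(E)\to\nu(E)$ could be applied.

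The argument is salvageable, but by a different split than the one you wrote. Fix a compact $K$ with $\nu(D_{c'/2}\setminus K)$ small and then replace $K$ throughout by a closed $\eta$-enlargement $K^\eta$ with $\eta$ drawn from the co-countable set of values where $\nu(\{z\mid d(z,K)=\eta\})=0$. Such a $K^\eta$ has non-empty interior, so $\nu(\partial K^\eta)=0$ is possible, and $D_{c'/2}\setminus K^\eta$ is a $\nu$-continuity set of small measure to which the hypothesis applies directly. On $K^\eta$ one loses compactness, but every $z\in K^\eta$ is within $\eta$ of some $z'\in K$, so $\omega_z(\delta)\le\omega_{z'}(\delta)+2\eta\le\sup_{z'\in K}\omega_{z'}(\delta)+2\eta$, which yields a Lipschitz bound of order $\big(\sup_{z'\in K}\omega_{z'}(\delta_n)+2\eta\big)\,\hat\nu_n(D_{c'/2})$ on the first piece, small after letting $n\to\infty$ and then $\eta\downarrow 0$. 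By contrast, the paper's proof sidesteps all of this: it works with the closed sets $E_c^{(\delta,\zeta)}=\{z\in D_c\mid\omega_z(\delta)\ge\zeta\}$, whose $\nu$-measure can be made small by continuity of $\nu|_{D_c}$ from above, and exploits that $\|z-\tilde z\|_\infty\le\zeta$ together with $\omega_z(\delta)\ge 3\zeta$ forces $\omega_{\tilde z}(\delta)\ge\zeta$, which controls the $\zeta$-fattening appearing in $d_c$; no compactness, tightness, or continuity-set bookkeeping is needed.
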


To the best of our knowledge, no result on the asymptotic normality of an estimator of the exponent measure is known. Indeed, since here estimators are random measures, for such a result one has to consider a family $\mathcal{G}\subset C[0,1]$ of test functions and prove that $\big(\lambda_n^{-1}(\int g \, d\hat\nu_n- \int g\, d\nu)\big)_{g\in\mathcal{G}}$ converges to a Gaussian process uniformly on $\mathcal{G}$. However, no family $\mathcal{G}$ suggests itself, and it seems likely that the choice of a suitable family depends on the applications one has in mind. We thus refrain from investigating the asymptotic normality of $\hat\nu_n^*$.

\section{Proofs}

\begin{proofof} Theorem \ref{theo:simple}. \rm
  We only verify \eqref{eq:dUconv} as the other assertions can be proved by similar arguments.

  For $t\in[0,t_{n,1}]$, one has
    \begin{eqnarray*}
    \lambda_n^{-1} \frac{\inter{\hat U_n}_{n,t}(n/k_n)- U_t(n/k_n)}{a_t(n/k_n)} -B_t
    & = & \frac{\hat U_{n,t_{n,1}}(n/k_n)-U_{t_{n,1}}(n/k_n)}{a_{t_{n,1}}(n/k_n)}\cdot \lambda_n^{-1}\Big(\frac{a_{t_{n,1}}(n/k_n)}{a_t(n/k_n)}-1\Big) \\
    & & { } + \lambda_n^{-1}\frac{\hat U_{n,t_{n,1}}(n/k_n)-U_{t_{n,1}}(n/k_n)}{a_{t_{n,1}}(n/k_n)}-B_{t_{n,1}}\\
    & & { }+ \lambda_n^{-1} \frac{U_{t_{n,1}}(n/k_n)-U_t(n/k_n)}{a_t(n/k_n)}\\
    & & { }+ B_{t_{n,1}}-B_t.
    \end{eqnarray*}
 Condition \eqref{eq:Uconv} shows that the second term on the right-hand side tends to 0 in probability uniformly for all $t\in[0,t_{n,1}]$. In  particular, the first factor of the first term is stochastically bounded. Hence the first term tends to 0 by condition \eqref{eq:asmooth}. The last two summands vanish uniformly  by \eqref{eq:Usmooth} and the pathwise continuity of  $B$.
 Likewise, one can prove
 $$ \sup_{t\in[t_{n,j_n},1]} \bigg|\lambda_n^{-1} \frac{\inter{\hat U_n}_{n,t}(n/k_n)- U_t(n/k_n)}{a_t(n/k_n)} -B_t\bigg| \,\stackrel{(P)}{\longrightarrow}\, 0.
 $$

 Similarly, for $2\le j\le j_n$ and $i\in\{j-1,j\}$, one has uniformly for all $t\in (t_{n,j-1}-t_{n,j}]$
  \begin{eqnarray*}
    \lambda_n^{-1} \frac{\hat U_{n,t_{n,i}}(n/k_n)- U_t(n/k_n)}{a_t(n/k_n)} -B_t
    & = &  \frac{\hat U_{n,t_{n,i}}(n/k_n)- U_{t_{n,i}}(n/k_n)}{a_{t_{n,i}}(n/k_n)} \cdot
    \lambda_n^{-1} \Big(\frac{a_{t_{n,i}}(n/k_n)}{a_t(n/k_n)} -1\Big) \\
    & &  { } +\lambda_n^{-1} \frac{\hat U_{n,t_{n,i}}(n/k_n)- U_{t_{n,i}}(n/k_n)}{a_{t_{n,i}}(n/k_n)} -B_{t_{n,i}}\\
    & & { }+ \lambda_n^{-1} \frac{U_{t_{n,i}}(n/k_n)-U_t(n/k_n)}{a_t(n/k_n)}\\
    & & { }  + B_{t_{n,i}}-B_t\\
    & = & o_P(1)
  \end{eqnarray*}
  by \eqref{eq:Uconv}, \eqref{eq:asmooth}, \eqref{eq:Usmooth} and the continuity of $B$. Hence, with $c_{n,t}:= (t_{n,j}-t)/(t_{n,j}-t_{n,j-1})\in[0,1]$ one may conclude  that
  \begin{eqnarray*}
    \lefteqn{\lambda_n^{-1} \frac{\inter{\hat U_n}_{n,t}(n/k_n)- U_t(n/k_n)}{a_t(n/k_n)} -B_t}\\
    & = & c_{n,t} \Big( \lambda_n^{-1} \frac{\hat U_{n,t_{n,j-1}}(n/k_n)- U_t(n/k_n)}{a_t(n/k_n)} -B_t\Big)+ (1-c_{n,t})\Big( \lambda_n^{-1} \frac{\hat U_{n,t_{n,j}}(n/k_n)- U_t(n/k_n)}{a_t(n/k_n)} -B_t\Big)\\
    & = & o_P(1)
  \end{eqnarray*}
  uniformly for all $t\in(t_{n,j-1},t_{n,j}]$ and $2\le j\le j_n$, which proves assertion \eqref{eq:dUconv}.
\end{proofof}

The next lemma states some consequences of the conditions (M($\lambda_n$)) and (S($\lambda_n$)) that will be useful for the proof of Theorem \ref{theo:mainmarg}.
\begin{lemma} \label{lemma:ramification}
  If the conditions  (M($\lambda_n$)) and (S($\lambda_n$)) hold, then  for all $\tilde\tau>\tau$ there exists $n_{\tilde\tau}$ such that for all $n>n_{\tilde\tau}$
  \begin{equation}  \label{eq:lowbd}
   U_s(n/k_n)+\tilde\tau a_s(n/k_n)\ge U_t(n/k_n)+\tau a_t(n/k_n)\quad \forall s,t\in[0,1], |s-t|\le\delta_n.
  \end{equation}
  Moreover,
  \begin{equation} \label{eq:abound}
   \sup_{|s-t|\le\delta_n} \frac{a_t(n/k_n)}{a_s(n/k_n)}= O(1).
  \end{equation}
  In particular, for all  $\tilde\tau>\tau$ and $\eps>0$
  \begin{equation}  \label{eq:altS}
     \sup_{|s-t|\le \delta_n} P\Big\{ \frac{|X_s-X_t|}{a_t(n/k_n)}>\eps\lambda_n, X_s>U_t(n/k_n)+\tilde\tau a_t(n/k_n)\Big\} = o(\lambda_n k_n/n).
  \end{equation}
\end{lemma}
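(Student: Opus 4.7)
The plan is to establish the three assertions in the natural order (i), (ii), (iii), with (i) and (ii) feeding into the proof of (iii). The common theme is to use (M) to replace each survival probability $P\{X_t > U_t + v a_t\}$ by $(k_n/n)\rho_t(v)(1+o(\lambda_n))$, where $\rho_t(v):=(1+\gamma_t v)^{-1/\gamma_t}$, and then use (S), together with its counterpart obtained by interchanging $s$ and $t$ in the supremum, to couple $X_s$ and $X_t$ in the tail.

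For (i), by (M), $P\{X_s>U_s+\tilde\tau a_s\}=(k_n/n)\rho_s(\tilde\tau)(1+o(\lambda_n))$ uniformly in $s$. Starting from the inclusion $\{X_t>U_t+(\tau+\eps\lambda_n)a_t,|X_s-X_t|\le\eps\lambda_n a_t\}\subseteq\{X_s>U_t+\tau a_t\}$ and bounding the complement by (S), one obtains $P\{X_s>U_t+\tau a_t\}\ge(k_n/n)\rho_t(\tau)-C\eps\lambda_n(k_n/n)-o(\lambda_n k_n/n)$, after a Taylor expansion of $\rho_t$ at $\tau$. Uniform continuity of $\gamma_\cdot$ on $[0,1]$ and strict monotonicity of $v\mapsto\rho_t(v)$ furnish a constant $c>0$ with $\rho_t(\tau)-\rho_s(\tilde\tau)\ge c$ uniformly on $\{|s-t|\le\delta_n\}$ for large $n$. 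Choosing $\eps$ small enough forces $P\{X_s>U_s+\tilde\tau a_s\}<P\{X_s>U_t+\tau a_t\}$, and monotonicity of $1-F_s$ in its argument then yields $U_s+\tilde\tau a_s\ge U_t+\tau a_t$.

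Step (ii) I expect to be the main obstacle, and I would tackle it by contradiction. Suppose $a_{s_m}/a_{t_m}\to\infty$ along a subsequence with $|s_m-t_m|\le\delta_{n_m}$. Applying (i) in both orderings of $(s,t)$ with $\tilde\tau$ just above $\tau$ forces $(U_{t_m}-U_{s_m})/a_{s_m}$ into the bounded interval $[\tau-o(1),\tilde\tau+o(1)]$, so if one fixes $c\in(\tilde\tau-\tau,\tau_{\max}-\tau)$, then the level $u_m:=U_{s_m}+(\tau+c)a_{s_m}$ lies above $U_{t_m}$ by an amount of order $a_{s_m}$, and in particular $(u_m-U_{t_m})/a_{t_m}\to\infty$. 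By (M), $P\{X_{s_m}>u_m\}=(k_{n_m}/n_m)\rho_{s_m}(\tau+c)(1+o(\lambda_{n_m}))$ is bounded below by a positive multiple of $k_{n_m}/n_m$, since $\rho_s(\tau+c)$ is continuous and positive on the compact $[0,1]$. Because $u_m\ge U_{s_m}+\tau a_{s_m}$, (S) with the roles of $s$ and $t$ swapped gives $P\{X_{t_m}>u_m-\eps\lambda_{n_m}a_{s_m}\}\ge P\{X_{s_m}>u_m\}-o(\lambda_{n_m}k_{n_m}/n_m)$, still of that same order. On the other hand, $u_m-\eps\lambda_{n_m}a_{s_m}>U_{t_m}+Ma_{t_m}$ for any fixed $M$ once $n_m$ is large, so (M) combined with monotonicity of the survival function yields $P\{X_{t_m}>u_m-\eps\lambda_{n_m}a_{s_m}\}\le(k_{n_m}/n_m)\sup_{t\in[0,1]}\rho_t(M)(1+o(\lambda_{n_m}))$, and choosing $M$ large makes $\sup_t\rho_t(M)$ arbitrarily small, producing the contradiction. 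The symmetric argument shows that $a_{t_m}/a_{s_m}$ is bounded as well, establishing (ii) and providing a uniform lower bound $a_t/a_s\ge c_0>0$.

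Finally, for (iii) I would split $\{|X_s-X_t|/a_t>\eps\lambda_n,X_s>U_t+\tilde\tau a_t\}$ according to whether $X_t>U_t+\tau a_t$. On the tail event for $X_t$, (S) directly yields a bound of $o(\lambda_n k_n/n)$. On the complement, $X_s-X_t\ge(\tilde\tau-\tau)a_t$; moreover, (i) applied with $s$ and $t$ interchanged gives $U_t+\tilde\tau a_t\ge U_s+\tau a_s$, so the event sits inside $\{X_s>U_s+\tau a_s\}$. The lower bound $a_t/a_s\ge c_0>0$ furnished by (ii) converts $|X_s-X_t|>(\tilde\tau-\tau)a_t$ into $|X_s-X_t|/a_s\ge(\tilde\tau-\tau)c_0$, which (since $\lambda_n$ is bounded) implies $|X_s-X_t|/a_s>\eps'\lambda_n$ for a suitable fixed $\eps'>0$; (S) applied with $s$ and $t$ swapped then bounds the probability by $o(\lambda_n k_n/n)$, completing the proof.
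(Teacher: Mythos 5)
Your proof of \eqref{eq:lowbd} and \eqref{eq:altS} is correct and takes a genuinely different (and arguably cleaner, direct) route than the paper: you convert condition (M) into a two-sided approximation of the survival probability $P\{X_t>U_t+va_t\}\approx (k_n/n)\rho_t(v)$ with $\rho_t(v):=(1+\gamma_t v)^{-1/\gamma_t}$ and then compare probability levels, whereas the paper works with one-sided quantile inequalities and drives contradictions via the intermediate quantile $U_{s_n}(n/(2k_n))$. Both ways are fine. Your reduction of \eqref{eq:altS} to (S) (split on whether $X_t>U_t+\tau a_t$, then use \eqref{eq:lowbd} swapped and the $a_t/a_s$ bound) is also sound.

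However, there is a genuine gap in your proof of \eqref{eq:abound}. You claim that choosing $M$ large makes $\sup_{t\in[0,1]}\rho_t(M)$ arbitrarily small, and you use this to beat the lower bound coming from $\rho_{s_m}(\tau+c)$. But $M$ cannot exceed $\tau_{\max}=\inf_t 1/\gamma_t^-$, otherwise (M) no longer controls the relevant quantiles. When $\tau_{\max}<\infty$ (i.e.\ some $\gamma_t<0$) and $\gamma$ is not constant, for any $t'$ with $\gamma_{t'}>-1/\tau_{\max}$ the quantity $\rho_{t'}(M)$ stays bounded away from $0$ as $M\uparrow\tau_{\max}$; hence $\sup_t\rho_t(M)$ does not go to $0$ and may well stay above $\inf_s\rho_s(\tau+c)$, so no contradiction results. (Concretely, if $\gamma$ ranges over $[-1,1]$, $\tau+c=0.5$, $M<\tau_{\max}=1$, one has $\inf_s\rho_s(\tau+c)=0.5$ but $\sup_t\rho_t(M)=(1+M)^{-1}>0.5$.) The fix is to use the compactness that is already lurking: pass to a subsequence with $s_m,t_m\to t^*$, replace the $\sup_t/\inf_s$ comparison by the pointwise one at $t^*$, and exploit the strict monotonicity $\rho_{t^*}(M)<\rho_{t^*}(\tau+c)$ for any $M\in(\tau+c,\tau_{\max})$. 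The paper avoids this issue entirely by comparing against the single quantile $U_{s_n}(n/(2k_n))$, which forces the survival probability $\ge 2k_n/n$ and bounds the coupled probability by $k_n/n+o(\lambda_n k_n/n)$, producing an unconditional factor-of-two contradiction without any constant-chasing in $\rho_t$.
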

\begin{proof}
   Suppose assertion \eqref{eq:lowbd} were wrong. Then there exist sequences $s_n,t_n\in[0,1]$, $n\in\N$, such that $|s_n-t_n|\le\delta_n$ for all $n\in\N$ and
   $$ U_{s_n}(n/k_n)+\tilde\tau a_{s_n}(n/k_n)< U_{t_n}(n/k_n)+\tau a_{t_n}(n/k_n).
   $$
    Because $[0,1]$ is compact, we may assume w.l.o.g.\ that both sequences $(s_n)_{n\in\N}$ and $(t_n)_{n\in\N}$ converge to some limit $t\in[0,1]$. For any $\tau'\in(\tau,\tilde\tau)$ and $\zeta>0$, let $y_n':=(1+\gamma_{s_n}\tau')^{1/\gamma_{s_n}}$ and $y_n:=\big(1+\gamma_{t_n}(\tau+2\zeta\lambda_n)\big)^{1/\gamma_{t_n}}$. In view of condition (M($\lambda_n$)),  one has eventually
    $$ U_{s_n}(y_n'n/k_n) < U_{s_n}(n/k_n)+ a_{s_n}(n/k_n)(\tau'+\zeta\lambda_n) < U_{t_n}(n/k_n)+\tau a_{t_n}(n/k_n)
    $$
    and
    $$  U_{t_n}(y_nn/k_n) > U_{t_n}(n/k_n)+a_{t_n}(n/k_n)(\tau+\zeta\lambda_n). $$
    Note that by the definition of $U_{t_n}$ one has $P\{X_{t_n}>x\}> k_n/(y_n n)$ for all $x<U_{t_n}(y_n n/k_n)$.
    Thus, using condition (S($\lambda_n$)), we may conclude
    \begin{eqnarray}
      \frac 1{y_n} & < & \frac n{k_n} P\Big\{X_{t_n}>U_{t_n}(n/k_n)+a_{t_n}(n/k_n)(\tau+\zeta\lambda_n)\Big\} \nonumber\\
      & \le  & \frac n{k_n} P\Big\{X_{s_n}>U_{t_n}(n/k_n)+a_{t_n}(n/k_n)\tau\Big\}+o(\lambda_n)\nonumber\\
      & \le  & \frac n{k_n} P\Big\{X_{s_n}>U_{s_n}(ny_n'/k_n)\Big\}+o(\lambda_n)\nonumber\\
      & \le & \frac 1{y_n'} + o(\lambda_n). \label{eq:ycomp}
    \end{eqnarray}
    On the other hand, the continuity of the function $(\gamma_t)_{t\in[0,1]}$ implies $y_n'-y_n\to (1+\gamma_t\tau')^{1/\gamma_t}-(1+\gamma_t\tau)^{1/\gamma_t}>0$, in contradiction to \eqref{eq:ycomp}.
    Hence assertion \eqref{eq:lowbd} is proved.

    Using this inequality and interchanging the roles of $s$ and $t$ in condition (S($\lambda_n$)) yields
    \begin{equation}  \label{eq:Svar1}
       \sup_{|s-t|\le \delta_n} P\Big\{ \frac{|X_s-X_t|}{a_s(n/k_n)}>\eps\lambda_n, X_s>U_t(n/k_n)+\tilde\tau a_t(n/k_n)\Big\} = o(\lambda_n k_n/n)
    \end{equation}
    for all $\tilde\tau>\tau$ and $\eps>0$. Now suppose assertion \eqref{eq:abound} were wrong, i.e.\ there exist $s_n,t_n\in[0,1]$ such that $|s_n-t_n|\le\delta_n$ and $a_{t_n}(n/k_n)/a_{s_n}(n/k_n)\to \infty$. Obviously, condition (S($\lambda_n$)) for a specific $\tau$ implies (S($\lambda_n$)) for all $\tau'\in(\max(\tau,0), \tau_{\max})$. Choose some $\tau''\in(\max(\tau,0),\tau')$ and $\tau'''\in(\tau',\tau_{\max})$. Then, by condition (M($\lambda_n$)) and \eqref{eq:lowbd} (applied with $(\tau''',\tau')$ instead of $(\tilde\tau,\tau)$), one has eventually
    \begin{eqnarray*}
      U_{s_n}(n/(2k_n)) & > & U_{s_n}(n/k_n)+a_{s_n}(n/k_n)\Big(\frac{2^{-\gamma_{s_n}}-1}{\gamma_{s_n}}-\lambda_n\Big)\\
      & \ge & U_{t_n}(n/k_n)+\tau'a_{t_n}(n/k_n) + a_{s_n}(n/k_n)\Big(\frac{2^{-\gamma_{s_n}}-1}{\gamma_{s_n}}-\lambda_n-\tau'''\Big)\\
      & \ge & U_{t_n}(n/k_n)+\tau''a_{t_n}(n/k_n).
    \end{eqnarray*}
    Hence, \eqref{eq:Svar1} implies that for sufficiently large $n$
    \begin{eqnarray*}
      2 & \le & \frac n{k_n}P\big\{X_{s_n}>  U_{t_n}(n/k_n)+\tau''a_{t_n}(n/k_n)\big\} \\
      & \le & \frac n{k_n}P\big\{X_{t_n}>  U_{t_n}(n/k_n)+\tau''a_{t_n}(n/k_n)-\lambda_n a_{s_n}(n/k_n)\big\}+o(\lambda_n)\\
      & \le & \frac n{k_n}P\big\{X_{t_n}>  U_{t_n}(n/k_n)\big\}+o(\lambda_n)\\
      & \le & 1+o(\lambda_n).
    \end{eqnarray*}
    As this is obviously a contradiction, assertion  \eqref{eq:abound} is proved. Now \eqref{eq:altS} follows readily from \eqref{eq:Svar1}.
\end{proof}

\begin{proofof} Theorem \ref{theo:mainmarg}. \rm
  We first establish \eqref{eq:Uapprox} in the case $\underline y>y_{\tau'}:=\sup_{t\in[0,1]}(1+\gamma_t\tau')^{\gamma_t}$ for some fixed $\tau'\in(\tau,\tau_{\max})$. Suppose this assertion were wrong. Then there exist sequences $s_n,t_n\in[0,1]$, $y_n\in[\underline y,\bar y]$ and some $\eps>0$ such that $|s_n-t_n|\le \delta_n$ and
  \begin{equation} \label{eq:Uneg}
    \frac{U_{s_n}(y_nn/k_n)-U_{t_n}(n/k_n)}{a_{t_n}(n/k_n)} - \frac{y_n^{\gamma_{t_n}}-1}{\gamma_{t_n}} \not\in[-\eps\lambda_n,\eps\lambda_n], \quad\forall\, n\in\N.
  \end{equation}
  We may also assume that $t_n\to t\in[0,1]$ and $y_n\to y\in[\underline y,\bar y]$, and that the left hand side of \eqref{eq:Uneg} always exceeds $\eps\lambda_n$ or that it is always less than $-\eps\lambda_n$, as this holds for a suitable subsequence. We will only consider the former case, because the latter can be treated analogously.

  By the choice of $\underline y$, the expression $(y_n^{\gamma_{t_n}}-1)/\gamma_{t_n}+\eps\lambda_n$ exceeds $\tau'$ for sufficiently large $n$. Hence
  $$ U_{s_n}(y_nn/k_n) > U_{t_n}(n/k_n) + a_{t_n}(n/k_n)\Big( \frac{y_n^{\gamma_{t_n}}-1}{\gamma_{t_n}}+\eps\lambda_n\Big)
  $$
  implies
  \begin{eqnarray*}
  \frac 1{y_n} & <  & \frac n{k_n} P\Big\{ X_{s_n}> U_{t_n}(n/k_n)+a_{t_n}(n/k_n)\Big( \frac{y_n^{\gamma_{t_n}}-1}{\gamma_{t_n}}+\eps\lambda_n\Big)\Big\}\\
  & \le & \frac n{k_n} P\Big\{ X_{t_n}> U_{t_n}(n/k_n)+a_{t_n}(n/k_n)\Big( \frac{y_n^{\gamma_{t_n}}-1}{\gamma_{t_n}}+\frac{\eps}{2} \lambda_n\Big)\Big\}+o(\lambda_n),
  \end{eqnarray*}
  where in the last step we have applied \eqref{eq:altS}. Let $\tilde y_n:= \big(y_n^{\gamma_{t_n}}+\eps\lambda_n\gamma_{t_n}/4\big)^{1/\gamma_{t_n}}$. In view of condition (M($\lambda_n$)), one has for sufficiently large $n$
  $$
    U_{t_n}(\tilde y_nn/k_n) < U_{t_n}(n/k_n)+a_{t_n}(n/k_n)\Big(\frac{\tilde y_n^{\gamma_{t_n}}-1}{\gamma_{t_n}}+\frac\eps 4\lambda_n\Big) =  U_{t_n}(n/k_n)+a_{t_n}(n/k_n)\Big(\frac{y_n^{\gamma_{t_n}}-1}{\gamma_{t_n}}+\frac\eps 2\lambda_n\Big).
  $$
  Therefore,
  \begin{eqnarray*}
  \frac 1{y_n} & <  & \frac n{k_n}P\big\{ X_{t_n}> U_{t_n}(n\tilde y_n/k_n) \big\} +o(\lambda_n)\\
   & \le & \frac 1{\tilde y_n} + o(\lambda_n)\\
   & = & \frac 1{y_n}\Big(1+\frac\eps 4 \lambda_n\gamma_{t_n}y_n^{-\gamma_{t_n}}\Big)^{-1/\gamma_{t_n}}+o(\lambda_n),
  \end{eqnarray*}
  which implies
  $$ 1- \Big(1+\frac\eps 4 \lambda_n\gamma_{t_n}y_n^{-\gamma_{t_n}}\Big)^{-1/\gamma_{t_n}} = o(\lambda_n). $$
  This, however, contradicts the fact that
  $$ \Big(1+\frac\eps 4 \lambda_n\gamma_{t_n}y_n^{-\gamma_{t_n}}\Big)^{-1/\gamma_{t_n}}= 1-\frac\eps 4 \lambda_n y_n^{-\gamma_{t_n}} + O(\lambda_n^2) = 1-\frac\eps 4 \lambda_n y^{-\gamma_t} + O(\lambda_n^2).
  $$

  Next we prove  \eqref{eq:Uapprox} for arbitrary $\underline y>0$. Let $c:=\underline y/(2y_{\tau'})$ so that $y/c\in[2y_{\tau'},2y_{\tau'}\bar y/\underline y]$ for $y\in[\underline y,\bar y]$. Furthermore, define
  $$ m_n := \inf\big\{ l\in\N\mid cn/k_n=ch(n)\le h(l)=l/k_l\big\}, $$
  so that $h(m_n-1)<ch(n)\le h(m_n)$.
  The regular variation of the function $h$ implies $m_n\sim c^{1/\kappa}n$. Moreover, by our assumptions on $k_n$ and $\lambda_n$,
  \begin{equation}  \label{eq:mnasymp}
    \frac{h(m_n-1)}{h(m_n)} = \frac{m_n-1}{m_n}\cdot \frac{k_{m_n}}{k_{m_n-1}} = (1-m_n^{-1})(1+o(\lambda_{m_n})) = 1+o(\lambda_n).
  \end{equation}
  An application of  \eqref{eq:Uapprox} in the special case considered above and of (M($\lambda_n$)) shows that
  \begin{eqnarray}
    \lefteqn{\frac{U_s(yn/k_n)-U_t(n/k_n)}{a_t(n/k_n)}}\nonumber\\
     & = & \frac{U_s(ch(n)y/c)-U_t(h(n))}{a_t(h(n))} \nonumber\\
      & \le & \frac{U_s(h(m_n)y/c)-U_t(h(m_n))}{a_t(h(m_n))} \cdot \frac{a_t(h(m_n))}{a_t(h(n))}+ \frac{U_t(h(m_n))-U_t(h(n))}{a_t(h(n))} \nonumber  \\
      & \le & \Big(\frac{(y/c)^{\gamma_t}-1}{\gamma_t}+o(\lambda_{m_n})\Big)\cdot \frac{a_t(h(m_n))}{a_t(h(n))}+ \frac{(h(m_n)/h(n))^{\gamma_t}-1}{\gamma_t} + o(\lambda_n)  \label{eq:Uapprox2}
  \end{eqnarray}
  uniformly for $y\in[\underline y,\bar y]$ and $s,t\in[0,1]$ such that $|s-t|\le\delta_n$. Note that by (M($\lambda_n$))
  $$  \frac{U_t(h(m_n)y)-U_t(h(m_n))}{a_t(h(m_n))} - \frac{y^{\gamma_t}-1}{\gamma_t}=o(\lambda_{m_n}) $$
  and
  $$ \frac{U_t(h(m_n)y)-U_t(h(n))}{a_t(h(n))} - \frac{\big(h(m_n)/h(n)y\big)^{\gamma_t}-1}{\gamma_t} = o(\lambda_n)
  $$
  uniformly for $y\in[\underline y,\bar y]$ and $t\in[0,1]$. Thus
  \begin{eqnarray*}
    \frac{a_t(h(m_n))}{a_t(h(n))}\Big(\frac{y^{\gamma_t}-1}{\gamma_t}+o(\lambda_{m_n})\Big)
    & = & \frac{a_t(h(m_n))}{a_t(h(n))}\cdot \frac{U_t(h(m_n)y)-U_t(h(m_n))}{a_t(h(m_n))}\\
    & = &
    \frac{U_t(h(m_n)y)-U_t(h(n))}{a_t(h(n))} - \frac{U_t(h(m_n))-U_t(h(n))}{a_t(h(n))} \\
    & = & \Big(\frac{h(m_n)}{h(n)}\Big)^{\gamma_t} \frac{y^{\gamma_t}-1}{\gamma_t} +o(\lambda_n).
  \end{eqnarray*}
  Since $h(m_n)/h(n)=c+o(\lambda_n)$ by \eqref{eq:mnasymp} and the definition of $m_n$, and $\lambda_{m_n}=O(\lambda_n)$ by assumption, we may conclude
  $$ \frac{a_t(h(m_n))}{a_t(h(n))} = c^{\gamma_t}+o(\lambda_n) $$
  uniformly for  $t\in[0,1]$.
  Therefore, the right hand side of \eqref{eq:Uapprox2} equals
  $(y^{\gamma_t}-1)/\gamma_t+o(\lambda_n)$. Likewise, one can show that
   \begin{eqnarray*}
    \lefteqn{\frac{U_s(yn/k_n)-U_t(n/k_n)}{a_t(n/k_n)}}\\
      & \ge & \Big(\frac{(y/c)^{\gamma_t}-1}{\gamma_t}+o(\lambda_{m_n-1})\Big)\cdot \frac{a_t(h(m_n-1))}{a_t(h(n))}+ \frac{(h(m_n-1)/h(n))^{\gamma_t}-1}{\gamma_t} + o(\lambda_n)\\
      & \ge & \frac{\big(yh(m_n-1)/(ch(n))\big)^{\gamma_t}-1}{\gamma_t}+o(\lambda_n) \\
      & =  & \frac{y^{\gamma_t}-1}{\gamma_t}+o(\lambda_n).
    \end{eqnarray*}
   Combining these bounds, we obtain \eqref{eq:Uapprox} in the general case.

  Equation \eqref{eq:Usmooth} is an obvious consequence for $y=1$.

  Combining condition (M($\lambda_n$)) with \eqref{eq:Usmooth} yields
  $$ \frac{U_s(yn/k_n)-U_t(n/k_n)}{a_s(n/k_n)} = \frac{U_s(n/k_n)-U_t(n/k_n)}{a_s(n/k_n)} +\frac{y^{\gamma_s}-1}{\gamma_s} + o(\lambda_n) = \frac{y^{\gamma_s}-1}{\gamma_s} + o(\lambda_n).
  $$
  On the other hand, by \eqref{eq:Uapprox}
  $$ \frac{U_s(yn/k_n)-U_t(n/k_n)}{a_t(n/k_n)} = \frac{y^{\gamma_t}-1}{\gamma_t} + o(\lambda_n),
  $$
  so that
  \begin{equation} \label{eq:afrac}
   \frac{a_t(n/k_n)}{a_s(n/k_n)} = \frac{(y^{\gamma_s}-1)/\gamma_s+o(\lambda_n)}{(y^{\gamma_t}-1)/\gamma_t+o(\lambda_n)}
   = \frac{(y^{\gamma_s}-1)/\gamma_s}{(y^{\gamma_t}-1)/\gamma_t}(1+o(\lambda_n))
  \end{equation}
  for all $y>1$ uniformly for $|s-t|\le\delta_n$. In particular
  $$ \frac{2^{\gamma_t}+1}{2^{\gamma_s}+1} = \frac{4^{\gamma_t}-1}{4^{\gamma_s}-1}\cdot \frac{2^{\gamma_s}-1}{2^{\gamma_t}-1} = 1+o(\lambda_n),
  $$
  which implies
  $$ \frac{2^{\gamma_s}}{2^{\gamma_s}+1}\big(2^{\gamma_t-\gamma_s}-1\big) =\frac{2^{\gamma_t}+1}{2^{\gamma_s}+1}-1 = o(\lambda_n)
  $$
   uniformly for $|s-t|\le\delta_n$, and hence \eqref{eq:gammasmooth}.

  Finally, it follows that the right hand side of  \eqref{eq:afrac} equals $1+o(\lambda_n)$ uniformly for $|s-t|\le\delta_n$, because $\gamma\mapsto (y^\gamma-1)/\gamma$ is differentiable, which proves \eqref{eq:asmooth}.
\end{proofof}

\begin{proofof} Theorem \ref{theo:expmeas}. \rm
  Denote the modulus of continuity of a function $z\in C[0,1]$ by
  $$ \omega_z(\delta) := \sup\{ |z(x)-z(y)|\mid x,y\in[0,1], |x-y|\le\delta\}.
  $$
   Since $\nu(D_c)<\infty$ and the closed sets $E_c^{(\delta,\zeta)}:=\{z\in D_c \mid\omega_z(\delta)\ge\zeta\}$ converge to the empty set as $\delta\downarrow 0$ for all $c,\zeta>0$, to each $\zeta,\iota>0$ there exists $\delta=\delta(\zeta,\iota)>0$ such that $\nu(E_c^{(\delta,\zeta)})<\iota$ holds. Moreover, $\omega_z(\delta)\ge 3\zeta$ and $\|z-\tilde z\|_\infty\le\zeta$ imply $\omega_{\tilde z}(\delta)\ge\zeta$.
  Therefore, on the event $\{d_c(\hat\nu_n|_{D_c},\nu|_{D_c})< \zeta\}$, one has
  $$\hat\nu_n(E_c^{(\delta,3\zeta)})\le \nu\big((E_c^{(\delta,3\zeta)})^\zeta\big)+\zeta\le \nu(E_c^{(\delta,\zeta)})+\zeta<\iota+\zeta.
  $$

  Next, fix some $\eps\in (0,c)$ and let $\zeta:=\eps/12$ and $\iota=\eps/4$. Because $\|z-\inter{z}_n\|_\infty\le 2\omega_z(\delta_n)$, from $\inter{z}_n\in F$ and $\omega_z(\delta_n)<\eps/4$ one may conclude $z\in F^{\eps/2}$.  Hence, on the event $\{d_{c-\eps}(\hat\nu_n|_{D_{c-\eps}},\nu|_{D_{c-\eps}})\le \eps/2\}$, one has for sufficiently large $n$ (such that $\delta_n\le \delta(\eps/12,\eps/4)$) and all closed sets $F\subset D_c$
  \begin{eqnarray}
   \hat\nu_n^*(F) & \le & \hat\nu_n\big\{ z\in C[0,1] \mid \inter{z}_n\in F, \omega_z(\delta_n)<\eps/4\big\} + \hat\nu_n\big\{ z\in D_c \mid\omega_z(\delta_n)\ge \eps/4\big\} \nonumber\\
   & \le & \hat\nu_n(F^{\eps/2})+\hat\nu_n(E_c^{(\delta,3\zeta)}) \nonumber\\
   & \le & \nu(F^\eps)+\eps/2+\iota+\zeta  \nonumber\\
   & \le & \nu(F^\eps)+\eps.  \label{eq:nubound1}
  \end{eqnarray}
  Likewise, on $\{d_{c-\eps}(\hat\nu_n|_{D_{c-\eps}},\nu|_{D_{c-\eps}})<\eps/2\}$
  \begin{eqnarray}
   \nu(F) & \le & \hat\nu_n(F^{\eps/2})+\eps/2\nonumber\\
    & \le &  \hat\nu_n\big\{ z\in C[0,1] \mid \inter{z}_n\in F^{\eps}, \omega_z(\delta_n)<\eps/4\big\}+\hat\nu_n\{z\in D_{c-\eps}\mid \omega_z(\delta_n)\ge\eps/4\}+\eps/2 \nonumber\\
    & \le & \hat\nu_n^*(F^\eps) +\eps.  \label{eq:nubound2}
   \end{eqnarray}
   A combination of \eqref{eq:nubound1} and  \eqref{eq:nubound2} shows that $\big\{d_{c-\eps}(\hat\nu_n|_{D_{c-\eps}},\nu|_{D_{c-\eps}})\le\eps/2\big\}\subset \big\{d_c(\hat\nu_n^*|_{D_c},\nu|_{D_c})\le\eps\big\}$ for all $c>\eps>0$. Hence, the consistency of $\hat\nu_n$ implies that of $\hat\nu_n^*$.
\end{proofof}

\bigskip

{\bf Acknowledgement}: L.\ de Haan and F.\ Turkman have been partly funded by FCT - Funda\c{c}\~{a}o para a Ci\^{e}ncia e a Tecnologia, Portugal,
through the project UID/MAT/00006/2013. H.\ Drees has been partly supported by DFG project DR 271/6-2 within the research unit FOR 1735. We thank two anonymous referees whose constructive remarks led to an improvement of the presentation.

\bigskip

\noindent {\large\bf References}
   \smallskip

\parskip1.2ex plus0.2ex minus0.2ex

\rueck
 Adler, R.J. (1990). {\em An Introduction to Continuity, Extrema, and Related Topics for General Gaussian Processes.} IMS Lecture Notes {\bf 12}.

\rueck
 Albin, J.M.P. (1990). On extremal theory for stationary processes. {\em Ann.\ Probab.} {\bf 18}, 92--128.

\rueck
  Buhl, S., and Kl\"{u}ppelberg, C. (2016). Anisotropic Brown-Resnick space-time processes: estimation and model assessment. {\em Extremes} {\bf 19}, 627--660.

\rueck
  Daley, D.J., and Vere-Jones , D. (2003). {\em An Introduction to the Theory of Point Processes, Vol.\ I: Elementary Theory and Methods.} Springer.

\rueck
  Daley, D.J., and Vere-Jones , D. (2008). {\em An Introduction to the Theory of Point Processes, Vol.\ II: General Theory and Structure.} Springer.

\rueck
    Dombry, C., \'{E}yi-Minko, F., and Ribatet, M. (2013). Conditional simulation of max-stable processes. {\em  Biometrika } {\bf 100}, 111--124.

\rueck
  Drees, H. (2003). Extreme Quantile estimation for dependent data with applications to finance.  {\em Bernoulli} {\bf 9}, 617--657.

\rueck
  Einmahl, J.H.J., and Lin,T. (2006). Asymptotic normality of extreme value estimators on $C[0,1]$. {\it Ann.\ Statist.} {\bf 34}, 469--492.

\rueck
  Falk, M., Hofmann, M., and Zott, M. (2015). On generalized max-linear models and their statistical interpolation. {\em J.\ Appl.\ Probab.} {\bf 52}, 736--751.

\rueck
  Fuentes, M., Henry, J., and Reich, B. (2013). Nonparametric spatial models for extremes: application to extreme temperature data. {\em Extremes} {\bf 16}, 75--101.

\rueck
  Genton, M.G., Padoan, S.A., and Dang, H. (2015). Multivariate max-stable spatial processes. {\em Biometrika} {\bf 102}, 215--230.

\rueck
  de Haan, L., and Ferreira, A. (2006). {\em Extreme Value Theory.}
  Springer.

\rueck
  de Haan, L., and  Pereira, T.T. (2006). Spatial extremes: models for the stationary case.   {\em Ann. Statist.} {\bf 34}, 146--168.

\rueck
  de Haan, L., and Lin, T. (2003). Weak consistency of extreme value estimators in $C[0,1]$. {\it Ann.\ Statist.} {\bf 31}, 1996--2012.

\rueck
 Lehmann, E.A., Phatak, A., Stephenson, A.G., and Lau, R. (2016). Spatial modelling framework for the characterisation of rainfall extremes at different durations and under climate change. {\em Environmetrics} {\bf 27}, 239--251.

\rueck
  Oesting, M., and Schlather, M. (2014). Conditional sampling for max-stable processes
with a mixed moving maxima representation. {\em Extremes} {\bf 17}, 157--192.

\rueck
  Oesting, M., Schlather, M., and Friederichs, P. (2017). Statistical post-processing of forecasts for extremes
using bivariate brown-resnick processes
with an application to wind gusts. {\em Extremes} {\bf 20}, 309--332.

\rueck
  Piterbarg, V.I.  (2004). Discrete and continuous time extremes of Gaussian processes. {\em Extremes} {\bf 7}, 161-–177.

\rueck
  Turkman, K.F. (2012).  Discrete and continuous time extremes of stationary processes.  In: {\em Handbook of Statistics} {\bf 30}, T.\ Subba Rao, S.\ Subba Rao, and C.R.\ Rao (eds.),  565--581, Elsevier.

\rueck
  Wang, Y., and Stoev, S.A. (2011). Conditional sampling for spectrally discrete max-stable random fields. {\em Adv. Appl. Probab.} {\bf 43}, 461-–483.

\end{document}